\title{On the inverse signed total domination number in graphs}
\author {
D.A. Mojdeh\thanks{Corresponding author}\\
Department of Mathematics\\
University of Mazandaran\\
Babolsar, IRI\\
{\tt damojdeh@umz.ac.ir}\vspace{3mm}\\
Babak Samadi\\
Department of Mathematics\\
Arak University\\
Arak, IRI\\
{\tt b-samadi@araku.ac.ir}
}
\date{}
 \newtheorem{theorem}{Theorem}[section]
\newtheorem{lemma}[theorem]{Lemma}
\newtheorem{observation}[theorem]{Observation}
\newtheorem{proposition}[theorem]{Proposition}
\theoremstyle{definition}
\begin{document}

\maketitle
\begin{abstract}
\noindent In this paper, we study the inverse signed total domination number in graphs and present new sharp lower and upper bounds on this parameter. For example by making use of the classic theorem of Tur\'{a}n (1941), we present a sharp upper bound on $K‎_{r+1}‎$-free graphs for $r‎\geq2‎$. Also, we bound this parameter for a tree from below in terms of its order and the number of leaves and characterize all trees attaining this bound.
\end{abstract}
{\bf Keywords:} inverse signed total dominating function, inverse signed total domination number, $k$-tuple total domination number.\vspace{1mm}\\
{\bf 2010 Mathematics Subject Classification:} 05C69.
\section{Introduction}

Throughout this paper, let $G$ be a finite connected graph with vertex set $V=V(G)$, edge set $E=E(G)$, minimum degree $\delta=\delta(G)$ and maximum degree $\Delta=\Delta(G)$. For any vertex $v\in V$, $N(v)=\{u\in G\mid uv\in E\}$ denotes the {\em open neighborhood} of $v$ in $G$, and $N[v]=N(v)\cup \{v\}$ denotes its {\em closed neighborhood}. For all $A,B‎\subseteq V‎$ let $[A,B]$ be the set of edges having one end point in $A$ and the other in $B$. We use \cite{we} as a reference for terminology and notation which are not defined here.\\
A set  $D\subseteq V$ is a {\em total dominating set} in $G$ if each vertex in $V$ is adjacent to at least one vertex in $D$. The {\em total domination number $\gamma_{t}(G)$} is the minimum cardinality of a total dominating set in $G$.\\
The {\em $k$-tuple total dominating set} (or {\em $k$-total dominating set}) was introduced by Kulli \cite{k}, as a subset $D\subseteq V$ with $|N(v)\cap D|\geq k$ for all $v\in V$, where $1\leq k\leq \delta$. The {\em $k$-tuple total domination number} (or {\em $k$-total domination number}), denoted $\gamma_{\times k,t}(G)$, is the smallest number of vertices in a $k$-tuple total dominating set. Clearly, $\gamma_{\times 1,t}(G)=‎\gamma‎_{t}(G)‎‎$.\\
Let $B\subseteq V$. For a real-valued function $f:V\rightarrow R$ we define $f(B)=\sum_{v \in B}f(v)$. Also, $f(V)$ is the weight of $f$. A {\em signed total dominating function}, abbreviated STDF, of $G$ is defined in \cite{z} as a function $f:V\rightarrow \{-1,1\}$ such that $f(N(v))\geq1$ for every $v\in V$. The {\em signed total domination number} (STDN) of $G$, $\gamma_{st}(G)$, is the minimum weight of a STDF of $G$. If we replace $"‎\geq‎"$ and "minimum" with $"‎\leq‎"$ and "maximum", respectively, in the definition of STDN, we will have the {\em signed total $2$-independence function} (ST$2$IF) and the {\em signed total $2$-independence number} (ST$2$IN) of the graph $G$. This concept was introduced in \cite{ws} and studied in \cite{w,w1} as the {\em negative decision number}.\\
An {\em inverse signed total dominating function}, abbreviated ISTDF, of $G$ is defined in \cite{hfx} as a function $f:V\rightarrow \{-1,1\}$ such that $f(N(v))‎\leq0‎$ for every $v\in V$. The {\em inverse signed total domination number} (ISTDN), denoted by $‎\gamma‎‎^{0}_{st}‎‎(G)‎$, is the maximum weight of an ISTDF of $G$. For more information the reader can consult \cite{{ansv}}.\\
In this paper, we continue the study of the concept of inverse signed total domination in graphs. In Section $2$, we present a sharp upper bound on a general graph $G$ by considering the concept of $k$-tuple total domination in graphs. Moreover, as an application of the well-known theorem of Tur\'{a}n \cite{t} about $K_p$-free graphs we give an upper bound on $\gamma‎‎^{0}_{st}‎‎(G)$ for a $K‎_{r+1}‎$-free graph $G$ and show the bound is the best possible by constructing an $r$-partite graph attaining the bound. In Section $3$, we discuss ISTDN for regular graphs and give sharp lower and upper bounds on the ISTDN of regular graphs. Finally, in the next section we show that this parameter is not bounded from both above and below in general, even for trees. Also, we give a lower bound on the ISTDN of a tree $T$ as $\gamma‎‎^{0}_{st}‎‎(T)‎\geq-n+2(‎\lfloor‎\frac{\ell‎_{1}‎}{2}‎‎\rfloor‎‎+...+‎\lfloor‎\frac{\ell‎_{s}‎}{2}‎‎\rfloor)‎$, where $\ell‎_{1},...,\ell‎_{s}$ are the number of leaves adjacent to its $s$ support vertices, and characterize all trees attaining this bound.


\section{Two upper bounds}

Throughout this paper, if $f$ is an ISTDF of $G$, then we let $P$ and $M$ denote the sets of those vertices which are assigned $1$ and $-1$ under $f$, respectively. We apply the concept of tuple total domination to obtain a sharp upper bound on $\gamma_{st}(G)$. 
It is easy to check the proof of the following observation.
\begin{observation}
 Let $K_n$ and $C_n$ be complete graph  and cycle with $n$ vertices. Then\\

\emph{(i)}\ $\gamma‎‎^{0}_{st}‎‎(K_n) = \begin{cases} -2\ \ &\text{if}\ \ n‎\equiv0\ (mod\ 2)‎\\
                              -1\ \  &\text{if}\ \ n‎\equiv1\ (mod\ 2)‎.

                    \end{cases}$\\

\emph{(ii)}\ $\gamma‎‎^{0}_{st}‎‎(C_n) = \begin{cases} \ 0\ \ &\text{if}\ \ n‎\equiv0\ (mod\ 4) \\
                                -1\ \ &\text{if}\ \ n‎\equiv1\ or\ 3\ (mod\ 4)\\
-2\ \ &\text{if}\ \ n‎\equiv2\ (mod\ 4).

                    \end{cases}$\\

\emph{(iii)}\  $‎\gamma‎_{t}‎‎(K_{n})=2$ for $n‎\geq 2‎$. \\

\emph{(iv)}\  $\gamma‎‎_{t}‎‎(C_n) = \begin{cases} \left\lceil \frac{n}{2}\right\rceil +1\ &\text{if}\ \ n‎\equiv2\ (mod\ 4)\\
                                \left\lceil \frac{n}{2}\right\rceil &\text{otherwise}.

                    \end{cases}$\\
  \end{observation}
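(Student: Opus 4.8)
The statement is really four independent claims, so the plan is to prove them one at a time; parts (i) and (ii) are the substantive ones, while (iii)--(iv) are (near-)folklore facts about $\gamma_t$.

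For (i) I would start from the fact that in $K_n$ we have $N(v)=V\setminus\{v\}$, so for an ISTDF $f$ the condition $f(N(v))\le 0$ becomes $f(V)-f(v)\le 0$, i.e.\ $f(V)\le f(v)$ for every $v$. If every vertex were assigned $1$ then $f(V)=n\ge 2>1=f(v)$, a contradiction; hence $M\neq\emptyset$ and $f(V)\le -1$. Writing $p=|P|$ and $m=|M|=n-p$, the inequality $f(V)\le\min_v f(v)=-1$ is exactly $p-m\le -1$, i.e.\ $p\le\lfloor(n-1)/2\rfloor$, while the weight equals $f(V)=2p-n$. Maximising $p$ and distinguishing the parity of $n$ yields the values $-2$ (for $n$ even) and $-1$ (for $n$ odd); for sharpness one checks directly that any $f$ with precisely $\lfloor(n-1)/2\rfloor$ ones is an ISTDF.

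For (ii) I would fix a cyclic labelling $v_1,\dots,v_n$ of $C_n$. Since $N(v_i)=\{v_{i-1},v_{i+1}\}$ and the values are $\pm1$, the requirement $f(v_{i-1})+f(v_{i+1})\le 0$ says precisely that $v_{i-1}$ and $v_{i+1}$ are not both in $P$; that is, $f$ is an ISTDF of $C_n$ if and only if $P$ contains no two vertices at distance $2$ in $C_n$. The ``distance-$2$'' graph on $V(C_n)$ is a disjoint union of cycles --- a single $C_n$ when $n$ is odd and two copies of $C_{n/2}$ when $n$ is even (with the degenerate $C_2$ read as an edge when $n=4$) --- so the largest feasible $|P|$ is its independence number, namely $\lfloor n/2\rfloor$ for $n$ odd and $2\lfloor n/4\rfloor$ for $n$ even. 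Substituting into $f(V)=2|P|-n$ and splitting into residues modulo $4$ produces the three announced values $0,-1,-2$; sharpness then follows by exhibiting the extremal $P$ explicitly, e.g.\ the repeating block $(+,+,-,-)$ when $n\equiv 0\pmod 4$.

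Parts (iii) and (iv) I would treat as standard. For (iii): every graph without isolated vertices satisfies $\gamma_t\ge 2$, and in $K_n$ the two ends of any edge form a total dominating set, so $\gamma_t(K_n)=2$. For (iv) I would either cite the classical total-domination formula for cycles or re-derive it --- the lower bound by a counting argument over the consecutive triples $\{v_{i-1},v_i,v_{i+1}\}$ and the matching upper bound from the ``two consecutive in, two consecutive out'' pattern, with the single forced extra vertex accounting for the $+1$ when $n\equiv 2\pmod 4$. The only genuinely delicate point in the whole observation is the reduction in (ii): one must be sure that the ISTDF constraints never couple the two parity classes of $C_n$ (they do not, since each constraint lives inside a single class), so that the independence-number bound is attainable in both components simultaneously; once that is clear, the rest is pure bookkeeping over residue classes.
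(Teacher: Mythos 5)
The paper does not actually prove this observation --- it is stated with the remark that it is ``easy to check'' --- so there is no argument of the authors' to compare against; your write-up is a correct and complete verification. Part (i) is right: for $K_n$ ($n\ge 2$) the condition $f(N(v))=f(V)-f(v)\le 0$ forces $f(V)\le\min_v f(v)=-1$, hence $|P|\le\lfloor (n-1)/2\rfloor$, and the parity split plus the direct check of attainability gives $-2$ and $-1$. Your reduction in (ii) --- that $f$ is an ISTDF of $C_n$ iff $P$ is independent in the ``distance-$2$'' graph, which is a single $n$-cycle for odd $n$ and two $(n/2)$-cycles for even $n$ --- is clean and handles the four residue classes correctly, including the degenerate cases $n=3$ (where the distance-$2$ graph is a triangle) and $n=4$ (a perfect matching); your observation that the constraints decompose over the two parity classes, so the independence bound is simultaneously attainable, is exactly the point that needs saying. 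Parts (iii) and (iv) are indeed standard and your treatment is adequate. The only caveat worth recording is that (i) implicitly requires $n\ge 2$ (for $K_1$ the empty-neighborhood convention would give weight $1$), which is consistent with the paper's standing assumption $\delta\ge 1$.
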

We make use of them to show that the following bound is sharp.

\begin{theorem}
If $G$ is a graph of order $n$ and minimum degree $‎\delta‎\geq1‎‎$, then
$$\gamma‎‎^{0}_{st}‎‎(G)‎\leq n-2‎\lceil‎\frac{2‎\gamma‎_{t}(G)‎‎+‎\delta-2‎}{2}‎‎\rceil‎‎‎$$
and this bound is sharp.
\end{theorem}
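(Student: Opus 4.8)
The plan is to start from an inverse signed total dominating function $f$ of maximum weight $\gamma^{0}_{st}(G)$, with associated partition $V = P \cup M$ into the $+1$ and $-1$ vertices, and to extract a tuple-total-dominating structure from $M$. The key observation is that the defining inequality $f(N(v)) \le 0$ for every $v$ means $|N(v)\cap M| \ge |N(v)\cap P| = \deg(v) - |N(v)\cap M|$, hence $|N(v)\cap M| \ge \lceil \deg(v)/2\rceil \ge \lceil \delta/2 \rceil$ for every $v \in V$. So $M$ is a $k$-tuple total dominating set with $k = \lceil \delta/2\rceil$, and in particular $|M| \ge \gamma_{\times k,t}(G)$. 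The weight is $\gamma^{0}_{st}(G) = |P| - |M| = n - 2|M|$, so any lower bound on $|M|$ turns into an upper bound on the weight. The crude route gives $\gamma^{0}_{st}(G) \le n - 2\gamma_{\times \lceil \delta/2\rceil, t}(G)$; the work is to show $\gamma_{\times \lceil\delta/2\rceil,t}(G) \ge \lceil (2\gamma_t(G) + \delta - 2)/2 \rceil$, i.e. that forcing $\lceil\delta/2\rceil$ neighbours in the dominating set costs at least about $\gamma_t(G) + \delta/2 - 1$ vertices.

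The heart of the argument is therefore a lower bound of the shape $\gamma_{\times k,t}(G) \ge \gamma_t(G) + (k-1)$, or more precisely the statement needed here with $k = \lceil \delta/2 \rceil$. I would prove this directly on $M$ rather than citing a black box. Take a vertex $u$ and count: $u$ has at least $k$ neighbours in $M$. Pick one such neighbour $w$; then $M \setminus \{w\}$ still needs to totally dominate every vertex other than possibly $u$, and in fact, peeling off neighbours one at a time, $M$ minus any $k-1$ of $u$'s neighbours in $M$ is still a total dominating set of $G$ — because every vertex $v$ had $\ge \lceil\deg(v)/2\rceil \ge 1$ neighbours in $M$, and removing $k-1$ vertices all incident to the single vertex $u$ can drop $v$'s count below $1$ only if all of $v$'s $M$-neighbours were among those $k-1$; but then $v$ had at most $k-1 < k \le \lceil \deg(v)/2 \rceil$... one has to be slightly careful when $\deg(v)$ is small. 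The clean version: let $w_1,\dots,w_k \in N(u)\cap M$. For each $i$, the set $M \setminus \{w_1,\dots,w_{i-1}\}$ has the property that every vertex still has a neighbour in it except possibly a bounded set; I would instead argue that $M \setminus \{w_2,\dots,w_k\}$ (removing $k-1$ of them, keeping $w_1$) is a total dominating set: a vertex $v$ fails only if $N(v)\cap M \subseteq \{w_2,\dots,w_k\}$, forcing $|N(v)\cap M| \le k-1$, contradicting $|N(v)\cap M|\ge \lceil\deg(v)/2\rceil$ unless $\deg(v) \le 2k-3$ — and then $v \ne u$'s neighbours handle the edge cases since $w_1$ or an untouched vertex covers them. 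This gives $\gamma_t(G) \le |M| - (k-1)$, hence $|M| \ge \gamma_t(G) + k - 1 = \gamma_t(G) + \lceil\delta/2\rceil - 1$, and then $n - 2|M| \le n - 2\lceil (2\gamma_t(G)+\delta-2)/2\rceil$ after matching $\gamma_t(G) + \lceil \delta/2\rceil - 1 = \lceil (2\gamma_t(G)+\delta-2)/2\rceil$.

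The main obstacle I anticipate is precisely the low-degree bookkeeping in the peeling step: showing that deleting $k-1$ vertices, all lying in the neighbourhood of one vertex $u$, from the $k$-tuple total dominating set $M$ leaves a genuine total dominating set, without accidentally stranding some low-degree vertex $v$ whose entire (small) neighbourhood happened to sit in the deleted set. Handling this may require choosing the $w_i$ more carefully (e.g. ensuring at least one retained $M$-vertex dominates each potential offender) or splitting into the case $\delta$ even versus $\delta$ odd, which is also where the ceiling in the bound comes from. For sharpness I would exhibit the extremal examples suggested by the Observation — complete graphs $K_n$ and cycles $C_n$ — checking that for these the chain of inequalities $\gamma^{0}_{st}(G) \le n - 2|M|$, $|M| = \gamma_t(G) + \lceil\delta/2\rceil - 1$ holds with equality; for $K_n$, $\delta = n-1$, $\gamma_t = 2$, and the stated value of $\gamma^{0}_{st}(K_n)$ should match $n - 2\lceil (2\cdot 2 + n - 3)/2\rceil = n - 2\lceil (n+1)/2 \rceil$, which one verifies equals $-1$ for odd $n$ and $-2$ for even $n$.
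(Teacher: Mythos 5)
Your proposal is correct and follows essentially the same route as the paper: both show that the set $M$ of vertices assigned $-1$ by a maximum ISTDF is a $\lceil\delta/2\rceil$-tuple total dominating set, and then peel off $\lceil\delta/2\rceil-1$ of its vertices to leave a total dominating set, giving $|M|\ge\gamma_t(G)+\lceil\delta/2\rceil-1$. The ``main obstacle'' you anticipate is not actually there: since \emph{every} vertex $v$ (low-degree ones included) satisfies $|N(v)\cap M|\ge\lceil\deg(v)/2\rceil\ge\lceil\delta/2\rceil=k$, deleting \emph{any} $k-1$ vertices of $M$ --- with no need to restrict them to the neighbourhood of a single vertex $u$ --- leaves every vertex with at least one $M$-neighbour, so no case analysis on $\deg(v)$ or on the parity of $\delta$ is required. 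The paper packages the same peeling as the chain $\gamma_{\times k,t}(G)\ge\gamma_{\times(k-1),t}(G)+1\ge\cdots\ge\gamma_t(G)+k-1$, removing one vertex at a time, which sidesteps the bookkeeping you were worried about entirely.
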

\begin{proof}
Let $f:V‎\rightarrow\{-1,1\}‎$ be a maximum ISTDF of $G$. The condition $f(N(v))‎\leq0‎$, for all $v‎\in V‎$, follows that $|N(v)‎\cap M‎|‎\geq ‎\lceil‎\frac{deg(v)}{2}‎‎\rceil‎\geq‎‎‎ ‎\lceil‎‎\frac{‎\delta‎}{2}‎\rceil‎‎‎$. So, $M$ is a $\lceil‎‎\frac{‎\delta‎}{2}‎\rceil$-tuple total dominating set in $G$ and therefore
\begin{equation}
(n-\gamma‎‎^{0}_{st}‎‎(G))/2=|M|‎\geq ‎\gamma‎_{‎\times\lceil‎‎\frac{‎\delta‎}{2}‎\rceil,t}‎‎‎(G).
\end{equation}
Now let $D$ be a minimum $\lceil‎‎\frac{‎\delta‎}{2}‎\rceil$-tuple total dominating set in $G$. Hence, $|N(v)‎\cap D|‎\geq ‎\lceil‎‎\frac{‎\delta‎}{2}‎\rceil$, for all $v\in V$. Let $u\in D$. Then $|N(v)‎\cap (D‎\setminus\{u\}‎)‎|‎\geq ‎\lceil‎‎\frac{‎\delta‎}{2}‎\rceil-1$, for all $v\in V$. This shows that  $D‎\setminus\{u\}$ is a ($‎\lceil‎‎\frac{‎\delta‎}{2}‎\rceil-1$)-tuple total dominating set in $G$. Therefore, $‎\gamma‎_{‎\times\lceil‎‎\frac{‎\delta‎}{2}‎\rceil,t}‎‎‎(G)‎\geq ‎\gamma‎_{‎\times(\lceil‎‎\frac{‎\delta‎}{2}‎\rceil-1),t}‎‎‎(G)+1‎$.
If we iterate this process we finally arrive at
$$\gamma‎_{‎\times\lceil‎‎\frac{‎\delta‎}{2}‎\rceil,t}‎‎‎(G)‎\geq ‎\gamma‎_{‎\times(\lceil‎‎\frac{‎\delta‎}{2}‎\rceil-1),t}‎‎‎(G)+1‎\geq...‎\geq \gamma‎_{‎\times1,t}‎‎‎(G)‎‎+\lceil‎‎\frac{‎\delta‎}{2}‎\rceil-1=\gamma‎_{‎t}‎‎‎(G)‎‎+\lceil‎‎\frac{‎\delta‎}{2}‎\rceil-1.$$
Thus, (1) yields
$$(n-\gamma‎‎^{0}_{st}‎‎(G))/2‎\geq \gamma‎_{‎t}‎‎‎(G)‎‎+\lceil‎‎\frac{‎\delta‎}{2}‎\rceil-1‎$$
as desired. Moreover, this bound is sharp. It is sufficient to consider the complete graph $K‎_{n}‎$, when $n‎\geq2‎$ and the cycle $C‎_{n}‎$, when $n‎\geq3‎$.
\end{proof}

Note that the difference between $\gamma‎‎^{0}_{st}‎‎(G)$ and $n-2‎\lceil‎\frac{2‎\gamma‎_{t}(G)‎‎+‎\delta-2‎}{2}‎‎\rceil‎‎‎$ may be large. It is easy to check that for complete bipartite graph $K_{m,n}$;\vspace{0.25mm}\\
$\gamma‎‎^{0}_{st}‎‎(K_{m,n}) = \begin{cases} \ 0\ \ &\text{if}\ \ n,m‎\equiv0\ (mod\ 2) \\
                                -1\ \ &\text{if\ $m$\ and\ $n$\ have\ different\ parity} \\
-2\ \ &\text{if}\ \ n,m‎\equiv1\ (mod\ 2).

                    \end{cases}$\vspace{0.75mm}\\
While $n+m-2‎\lceil‎\frac{2‎\gamma‎_{t}(K_{m,n})‎‎+‎\delta-2‎}{2}‎‎\rceil\ge n-3‎‎‎$ where $n=max\{m,n\}$. The upper bound in Theorem \ref{th7} works better for bipartite graphs. We first recall that a graph is $K_p$-free if it does not contain the complete graph $K_p$ as an induced subgraph. For our next
upper bound, we use the following well-known theorem of Tur\'{a}n \cite{t}.
\begin{theorem}\label{th6}
If $G$ is a $K_{r+1}$-free graph of order $n$, then
$$|E(G)|\le \frac{r-1}{2r}\cdot n^2.$$
\end{theorem}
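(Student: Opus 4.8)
The plan is to establish this classical bound by the standard induction on the order $n$, peeling off a copy of $K_r$ at each step. Two preliminary reductions handle the ``ground cases''. First, if $n\le r$ then $|E(G)|\le\binom{n}{2}=\tfrac{n^2}{2}-\tfrac{n}{2}\le\tfrac{n^2}{2}-\tfrac{n^2}{2r}=\tfrac{r-1}{2r}n^2$, the middle inequality being exactly the hypothesis $n\le r$. Second, if $G$ contains no copy of $K_r$ at all, then $G$ is $K_r$-free, and invoking the statement for the value $r-1$ (which is vacuous for $r=2$, a $K_2$-free graph being edgeless) gives $|E(G)|\le\tfrac{r-2}{2(r-1)}n^2\le\tfrac{r-1}{2r}n^2$, the last step because $2r(r-2)\le 2(r-1)^2$. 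Hence from now on I may assume $n>r$ and that $G$ contains a clique $A$ with $|A|=r$.

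For the inductive step, set $B=V(G)\setminus A$, so $|B|=n-r$, and partition $E(G)$ into the edges inside $A$, the edges inside $B$, and the edges of $[A,B]$. There are exactly $\binom{r}{2}$ edges inside $A$; the induced subgraph $G[B]$ is again $K_{r+1}$-free and has order $n-r<n$, so by the induction hypothesis it contributes at most $\tfrac{r-1}{2r}(n-r)^2$ edges. The one genuine observation is that no vertex $b\in B$ can be adjacent to all $r$ vertices of $A$ --- otherwise $A\cup\{b\}$ would induce a $K_{r+1}$ --- so each vertex of $B$ sends at most $r-1$ edges into $A$, whence $|[A,B]|\le(r-1)(n-r)$. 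Adding the three bounds,
\[
|E(G)|\ \le\ \binom{r}{2}+(r-1)(n-r)+\frac{r-1}{2r}(n-r)^2 ,
\]
and it remains only to check that the right-hand side equals $\tfrac{r-1}{2r}n^2$. Indeed $\tfrac{r-1}{2r}\bigl(n^2-(n-r)^2\bigr)=\tfrac{r-1}{2r}\cdot r(2n-r)=\tfrac{(r-1)(2n-r)}{2}$, which is precisely $\binom{r}{2}+(r-1)(n-r)=(r-1)\bigl(n-\tfrac{r}{2}\bigr)$. This closes the induction.

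I do not expect a serious obstacle, since this is a textbook result; the points needing care are purely organizational. One must check that the two reductions genuinely cover every case before the peeling argument is applied --- the inductive step is valid only once an honest copy of $K_r$ has been exhibited, and the order drops by $r$ rather than by $1$ at each step, so strong induction on $n$ (with an outer induction on $r$ for the ``$K_r$-free'' reduction) is the convenient framework. The single combinatorial idea, that a vertex outside an $r$-clique meets it in at most $r-1$ vertices, is elementary, and it is pleasant that the resulting inequality closes with \emph{equality}, consistent with the bound being attained by the balanced complete $r$-partite graph on $n$ vertices. As alternatives one could run Zykov's symmetrization or the weight-shifting (Motzkin--Straus) argument, but the induction above is the most self-contained.
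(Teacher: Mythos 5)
Your argument is correct, and all the arithmetic checks out: the two reductions (the case $n\le r$ via $\binom{n}{2}\le\frac{r-1}{2r}n^2$, and the $K_r$-free case via the statement for $r-1$) do cover everything not handled by the peeling step, and the identity $\frac{r-1}{2r}\bigl(n^2-(n-r)^2\bigr)=\binom{r}{2}+(r-1)(n-r)$ closes the induction exactly as you claim. There is nothing to compare against here, however: the paper offers no proof of this statement at all --- it is quoted verbatim as Tur\'{a}n's classical 1941 theorem with a citation, and is used only as a black box in the proof of the upper bound for $K_{r+1}$-free graphs. So your contribution is a correct, self-contained proof of the cited result by the standard double induction (outer on $r$, inner strong induction on $n$, peeling off an $r$-clique); the only organizational point worth making explicit in a written version is the one you already flag, namely that the inner induction must be strong since the order drops by $r$ rather than by $1$.
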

\begin{theorem}\label{th7}
Let $r‎\geq2‎$ be an integer, and $G$ be a $K‎_{r+1}‎$-free graph of order $n$. If $c=‎\lceil‎\frac{‎\delta‎}{2}‎‎\rceil‎‎$, then
$$\gamma‎‎^{0}_{st}‎‎(G)‎‎\leq n-\frac{r}{r-1}\left(-c+\sqrt{c^2+4\frac{r-1}{r}cn}\right)$$
and this bound is sharp.
\end{theorem}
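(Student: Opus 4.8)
The plan is to take a maximum ISTDF $f$ of $G$ and, with $P$ and $M$ the sets of vertices assigned $1$ and $-1$ as usual, set $m=|M|$, so that $\gamma^{0}_{st}(G)=f(V)=|P|-|M|=n-2m$. The whole problem then reduces to bounding $m$ from below, and this will be driven by estimating $|[M,P]|$, the number of edges between $M$ and its complement, in two ways.

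First I would obtain a lower bound on $|[M,P]|$: for each $v\in P$ the inequality $f(N(v))\le 0$ gives $|N(v)\cap M|\ge\lceil\deg(v)/2\rceil\ge\lceil\delta/2\rceil=c$, and summing over $P$ yields $|[M,P]|=\sum_{v\in P}|N(v)\cap M|\ge c(n-m)$. Then an upper bound, which is the crux: applying $f(N(u))\le 0$ to the vertices $u\in M$ themselves gives $|N(u)\cap P|\le|N(u)\cap M|$ for every such $u$, and summing over $M$ gives $|[M,P]|=\sum_{u\in M}|N(u)\cap P|\le\sum_{u\in M}|N(u)\cap M|=2|E(G[M])|$. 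Since $G[M]$ is an induced subgraph of the $K_{r+1}$-free graph $G$, Theorem \ref{th6} gives $|E(G[M])|\le\frac{r-1}{2r}m^2$. Concatenating these three estimates,
$$c(n-m)\le|[M,P]|\le 2|E(G[M])|\le\frac{r-1}{r}m^2,$$
whence $\frac{r-1}{r}m^2+cm-cn\ge 0$. Since $m\ge c\ge 1$, this forces $m$ to be at least the positive root of $\frac{r-1}{r}x^2+cx-cn=0$, that is, $m\ge\frac{r}{2(r-1)}\left(-c+\sqrt{c^2+4\frac{r-1}{r}cn}\right)$; substituting into $\gamma^{0}_{st}(G)=n-2m$ yields exactly the stated inequality.

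For sharpness I would exhibit the balanced complete $r$-partite graph $G=K_{a,\dots,a}$ (with $r$ parts) for an even positive integer $a$, so that $n=ra$, $\delta=(r-1)a$, and $c=(r-1)a/2$. Assigning $1$ to half the vertices and $-1$ to the other half of each part gives an ISTDF of weight $0$ (every $N(v)$ is a union of entire parts and so has $f$-weight $0$), hence $\gamma^{0}_{st}(G)\ge 0$; on the other hand, putting $c=(r-1)a/2$ and $n=ra$ into the right-hand side makes the radicand equal to $\frac{9}{4}(r-1)^2a^2$ and the whole bound equal to $0$, so the bound is attained. One can check afterwards that all three displayed inequalities hold with equality in this example: every vertex has exactly $c$ neighbours in $M$, every vertex of $M$ splits its neighbourhood evenly between $M$ and $P$, and $G[M]$ is itself a balanced complete $r$-partite graph, i.e.\ a Tur\'{a}n graph.

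The main obstacle is the middle step $|[M,P]|\le 2|E(G[M])|$: the point is that the ISTDF condition has to be exploited at the vertices inside $M$, not merely at those outside, because replacing it by the trivial bound $|[M,P]|\le m(n-m)$ and combining with Tur\'{a}n's theorem would only give a strictly weaker estimate. Once these three inequalities are secured, solving the quadratic in $m$ and verifying the extremal construction are entirely routine.
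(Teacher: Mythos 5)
Your main argument is exactly the paper's: the lower bound $|[M,P]|\ge c\,|P|$ from the ISTDF condition at vertices of $P$, the key upper bound $|[M,P]|\le 2|E(G[M])|$ from the ISTDF condition at vertices of $M$, Tur\'{a}n's theorem applied to the induced subgraph $G[M]$, and the resulting quadratic inequality in $|M|$ solved via $|M|=(n-\gamma^{0}_{st}(G))/2$. All of these steps are correct and coincide with the paper's proof. Where you differ is the sharpness witness: the paper builds a somewhat elaborate $r$-partite graph $H(r)$ of order $r^2(r-1)$ out of complete bipartite blocks $H_i$ with extra edges, whereas you use the balanced Tur\'{a}n graph $K_{a,\dots,a}$ with $a$ even, where the half-and-half assignment on each part gives weight $0$ and the right-hand side of the bound also evaluates to $0$ (the radicand collapses to $\tfrac{9}{4}(r-1)^2a^2$). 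Your example is simpler and makes transparent which of the three inequalities are tight and why; the paper's construction attains the bound at a nonzero value $r(r-1)(r-2)$, which shows sharpness away from the degenerate weight-$0$ case, but either witness suffices for the claim as stated.
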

\begin{proof}
Let $f$ be an ISTDF of $G$. Let $v\in P$. Since $f(N(v))‎\leq0‎$, then $|N(v)‎\cap M‎|‎\geq \lceil‎‎\frac{‎\delta‎}{2}‎\rceil‎‎‎‎$. Therefore
\begin{equation}
|[M,P]|‎\geq ‎\lceil‎‎\frac{‎\delta‎}{2}‎\rceil‎‎‎‎|P|.
\end{equation}
Furthermore, Theorem \ref{th6} implies
$$|[M,P]|=\sum_{v\in M}|N(v)\cap P|\le \sum_{v\in M}|N(v)\cap M|=2|E(G[M])|\le\frac{r-1}{r}|M|^2.$$
Combining this inequality chain and (2), we arrive at
$$‎\frac{r-1}{r}|M|^2+c|M|‎-cn‎\geq0.‎$$
Solving the above inequality for $|M|$ we obtain
$$‎‎|M|‎\geq ‎\frac{r}{2(r-1)}\left(-c‎‎+‎\sqrt{c^2+4‎\frac{r-1}{r}cn‎}‎\right).$$
Because of $|M|=(n-\gamma‎‎^{0}_{st}‎‎(G)‎‎)/2$, we obtain the desired upper bound.\\
That the bound is sharp, can be seen by constructing an $r$-partite graph attaining this upper bound as follows. Let $H‎_{i}‎$ be a complete bipartite graph with vertex partite sets $X‎_{i}‎$ and $Y‎_{i}‎$, where $|X‎_{i}‎|=r-1$ and $|Y‎_{i}‎|=(r-1)^2$ for all $1‎\leq‎ i‎\leq r‎$. Let the graph $H(r)$ be the disjoint union of $H‎_{1},...,H‎_{r}‎‎$ by joining each vertex of $X‎_{i}‎$ (in $H‎_{i}‎$) with all vertices in union of $X_{j}‎$, $i‎\neq j‎$. Also, we add $(r-1)^3$ edges between $Y‎_{i}‎$ and the union of $Y‎_{j}‎$, $i‎\neq j‎$, so that every vertex of $Y‎_{i}‎$ has exactly $r-1$ neighbors in this union. Now let $Z‎_{i}=X‎_{i}‎\cup Y‎_{i+1}‎‎‎‎$, for all $1‎\leq‎ i‎\leq r‎‎‎$ (mod $r$). Then $H(r)$ is an $r$-partite graph of order $n=r^2(r-1)$ with partite sets $Z‎_{1},...,Z‎_{r}‎$. Clearly, every vertex in $Y‎_{i}‎$ has the minimum degree $‎\delta=2r-2‎$ and hence $c=r-1$. Now we define $f:V(H(r))‎\rightarrow\{-1,1\}‎$, by
$$f(v)=\left \{
\begin{array}{lll}
-1 & \mbox{if} & v\in X_{1}‎\cup...‎\cup X‎_{r}‎‎‎‎ \\
1 & \mbox{if} & v\in Y_{1}‎\cup...‎\cup Y‎_{r}.
\end{array}
\right.$$
It is easy to check that $f$ is an ISTDF of $H(r)$ with weight
$$r(r-1)^2-r(r-1)=n-\frac{r}{r-1}\left(-c+\sqrt{c^2+4\frac{r-1}{r}cn}\right).$$
This completes the proof.
\end{proof}


\section{Regular graphs}

Our aim in this section is to give sharp lower and upper bounds on the ISTDN of a regular graph. Henning \cite{h} and Wang \cite{w} proved that for an $r$-regular graph $G$,
\begin{equation}
\gamma‎‎_{st}‎‎(G)‎\leq‎\left \{
\begin{array}{lll}
‎(\frac{r^2+r+2}{r^2+3r-2})n‎ & \mbox{if} & r‎\equiv0\ (mod\ 2)‎ ‎‎‎‎ \vspace{1.5mm}\\
‎(\frac{r^2+1}{r^2+2r-1})n‎ & \mbox{if} & r‎\equiv1\ (mod\ 2)
\end{array}
\right.
\end{equation}
(see \cite{h}), and
\begin{equation}
\alpha_{st}^{2}(G)‎\geq‎\left \{
\begin{array}{lll}
‎(\frac{1-r}{1+r})n‎ & \mbox{if} & r‎\equiv0\ (mod\ 2)‎ ‎‎‎‎ \vspace{1.5mm}\\
‎(\frac{1+2r-r^2}{1+r^2})n‎ & \mbox{if} & r‎\equiv1\ (mod\ 2).
\end{array}
\right.
\end{equation}
(see \cite{w}). Furthermore, these bounds are sharp. Also, the following sharp lower an upper bounds on STDN and ST$2$IN of an $r$-regular graph $G$ can be found in \cite{z} and \cite{w1,ws}, respectively.
\begin{equation}
\gamma‎‎_{st}‎‎(G)‎‎\geq‎\left \{
\begin{array}{lll}
2n/r & \mbox{if} & r‎\equiv0\ (mod\ 2)‎ \\
‎n/r‎ & \mbox{if} & r‎\equiv1\ (mod\ 2)
\end{array}
\right.
\end{equation}
and
\begin{equation}
\alpha_{st}^{2}(G)‎‎\leq‎‎\left \{
\begin{array}{lll}
0& \mbox{if} & r‎\equiv0\ (mod\ 2)‎ ‎‎‎‎\\
n/r‎ & \mbox{if} & r‎\equiv1\ (mod\ 2).
\end{array}
\right.
\end{equation}

Applying the concept of tuple total domination we can show that there are special relationships among $\gamma‎‎^{0}_{st}‎‎(G)$, $\gamma_{st}‎‎(G)$ and $\alpha_{st}^{2}(G)$ when we restrict our discussion to the regular graph $G$.
\begin{theorem}\label{th8}
Let $G$ be an $r$-regular graph. If $r$ is odd, then $\gamma‎‎^{0}_{st}‎‎(G)=-\gamma_{st}‎‎(G)$ and if $r$ is even, then $\gamma‎‎^{0}_{st}‎‎(G)=\alpha_{st}^{2}(G)$.
\end{theorem}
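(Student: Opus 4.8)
The plan is to relate inverse signed total dominating functions directly to the other two types of signed functions by a sign-flip, and then use the regularity to convert the two defining inequalities into each other. If $f:V\to\{-1,1\}$ is any function on an $r$-regular graph $G$, set $g=-f$. For every vertex $v$ we have $f(N(v))+g(N(v))=0$, so $f(N(v))\le 0$ exactly when $g(N(v))\ge 0$. Thus ISTDFs of $G$ correspond bijectively (via $f\mapsto -f$, which negates the weight) to functions $g$ with $g(N(v))\ge 0$ for all $v$. The whole argument then reduces to showing that, on an $r$-regular graph, the condition $g(N(v))\ge 0$ is equivalent to $g(N(v))\ge 1$ when $r$ is odd, and to the condition defining a ST$2$IF (after a further flip) when $r$ is even.

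For the odd case: on an $r$-regular graph, $g(N(v))$ is a sum of $r$ terms each $\pm 1$, so it has the same parity as $r$, hence is odd; therefore $g(N(v))\ge 0$ forces $g(N(v))\ge 1$. Conversely $g(N(v))\ge 1$ trivially gives $g(N(v))\ge 0$. So the functions $g=-f$ arising from ISTDFs are precisely the STDFs of $G$. Maximizing the weight of $f$ is minimizing the weight of $g$ over STDFs, which gives $\gamma^0_{st}(G)=-\gamma_{st}(G)$.

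For the even case: here $g(N(v))$ is even, so $g(N(v))\ge 0$ is equivalent to $g(N(v))\ge 1$ is false — instead I should compare with the ST$2$IF condition directly. Recall a ST$2$IF is a function $h:V\to\{-1,1\}$ with $h(N(v))\le 1$ for all $v$, and $\alpha_{st}^2(G)$ is its maximum weight. Again using parity, $h(N(v))\le 1$ on an $r$-regular graph with $r$ even is equivalent to $h(N(v))\le 0$, i.e. to $-h$ being an ISTDF. So $h\mapsto -h$ is a weight-negating bijection between ST$2$IFs and ISTDFs, whence the maximum weight of an ISTDF equals $-(-\alpha_{st}^2(G))=\alpha_{st}^2(G)$; wait — more carefully, $f$ is an ISTDF iff $-f$ is a ST$2$IF, and $w(f)=-w(-f)$, so $\max w(f) = -\min w(\text{ST2IF})$. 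This is not obviously $\alpha_{st}^2(G)$, which is a \emph{max}. The resolution is that ST$2$IFs are closed under $f\mapsto -f$ in the even-regular case by the same parity argument ($h(N(v))\le 1\iff h(N(v))\le 0\iff -h(N(v))\ge 0\iff -h(N(v))\ge 1$, using evenness throughout), so the set of ST$2$IFs is symmetric and $\min w = -\max w = -\alpha_{st}^2(G)$, giving $\gamma^0_{st}(G)=\alpha_{st}^2(G)$.

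The only real subtlety — the step I would flag as the heart of the matter — is this repeated parity observation: on an $r$-regular graph every neighborhood-sum $f(N(v))$ of a $\{-1,1\}$-function has the same parity as $r$, so the strict-looking inequalities $\le 0$, $\le 1$, $\ge 0$, $\ge 1$ collapse into each other in the right pattern depending on the parity of $r$. Once that is stated cleanly, both equalities follow by writing down the weight-negating bijection $f\mapsto -f$ and matching optima; all remaining steps are bookkeeping. I would present the parity lemma first (possibly inline), then dispatch the odd and even cases in two short paragraphs as above.
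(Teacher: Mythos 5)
Your approach is genuinely different from the paper's. The paper proves the theorem by expressing all three parameters in terms of $k$-tuple total domination numbers of $G$ --- it shows $\gamma^{0}_{st}(G)=n-2\gamma_{\times\lceil r/2\rceil,t}(G)$, $\gamma_{st}(G)=2\gamma_{\times\lceil (r+1)/2\rceil,t}(G)-n$ and $\alpha_{st}^{2}(G)=n-2\gamma_{\times\lfloor r/2\rfloor,t}(G)$, and then reads off the two equalities from $\lceil r/2\rceil=\lceil (r+1)/2\rceil$ for odd $r$ and $\lceil r/2\rceil=\lfloor r/2\rfloor$ for even $r$. Your parity argument is more elementary and self-contained (it does not need the tuple-domination machinery), and your odd case is entirely correct: for odd $r$ every neighborhood sum is odd, so $f(N(v))\le 0\iff f(N(v))\le -1\iff (-f)(N(v))\ge 1$, and $f\mapsto -f$ is a weight-negating bijection between ISTDFs and STDFs, giving $\gamma^{0}_{st}(G)=-\gamma_{st}(G)$.

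The even case, however, contains a genuine error. The claim that ST$2$IFs are closed under $h\mapsto -h$ is false: on $C_{4}$ the constant function $h\equiv -1$ satisfies $h(N(v))=-2\le 1$ and so is a ST$2$IF, but $-h\equiv 1$ gives $(-h)(N(v))=2>1$. The broken link in your chain is ``$-h(N(v))\ge 0\iff -h(N(v))\ge 1$'': for even $r$ the sum is even, so $\ge 0$ does \emph{not} force $\ge 1$ (the value $0$ is possible), and in fact over the ST$2$IFs of $C_4$ the minimum weight is $-4$ while the maximum is $0$, so $\min w\ne -\max w$. Fortunately no negation is needed at all: you already observed the correct key fact, namely that for even $r$ parity gives $h(N(v))\le 1\iff h(N(v))\le 0$, so the defining condition of a ST$2$IF coincides with that of an ISTDF and the two classes of functions are literally the same set. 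Since $\alpha_{st}^{2}(G)$ and $\gamma^{0}_{st}(G)$ are both defined as the \emph{maximum} weight over that set, they are equal with no further argument. Your misstep originates where you write that $h(N(v))\le 0$ means ``$-h$ is an ISTDF''; it means $h$ itself is one, and the identity map, not $h\mapsto -h$, is the bijection you want.
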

\begin{proof}
By (1), we have
\begin{equation}
\gamma‎‎^{0}_{st}‎‎(G)‎\leq n-2‎\gamma‎_{‎\times‎‎\lceil‎\frac{r‎}{2}‎‎\rceil‎‎,t}(G)‎‎‎.
\end{equation}
Now let $D$ be a minimum $‎‎\lceil‎\frac{r‎}{2}‎‎\rceil‎$-tuple total dominating set in $G$. We define $f:V‎\rightarrow\{-1,1\}‎$, by
$$f(v)=\left \{
\begin{array}{lll}
-1 & \mbox{if} & v\in D \\
\ 1 & \mbox{if} & v\in V‎\setminus‎ D.
\end{array}
\right.$$
Taking into account that $D$ is a $‎‎\lceil‎\frac{r}{2}‎‎\rceil‎$-tuple total dominating set, we have $f(N(v))=|N(v)‎\cap(V‎\setminus D‎)‎|-|N(v)‎\cap D‎|=deg(v)-2|N(v)‎\cap D‎|‎\leq r-2‎\lceil‎\frac{‎r}{2}‎‎\rceil‎‎‎‎‎\leq0‎$. Thus, $f$ is an ISTDF of $G$ with weight $n-2‎\gamma‎_{‎\times‎‎\lceil‎\frac{r‎}{2}‎‎\rceil‎‎,t}(G)$ and therefore $\gamma‎‎^{0}_{st}‎‎(G)‎‎\geq‎ n-2‎\gamma‎_{‎\times‎‎\lceil‎\frac{r‎}{2}‎‎\rceil‎‎,t}(G)‎‎‎$. Now the inequality (7) implies
\begin{equation}
\gamma‎‎^{0}_{st}‎‎(G)‎‎=n-2‎\gamma‎_{‎\times‎‎\lceil‎\frac{r‎}{2}‎‎\rceil‎‎,t}(G).
\end{equation}
The following equalities for the STDN and the ST$2$IN of $G$ can be proved similarly.
\begin{equation}
\gamma‎‎_{st}‎‎(G)‎‎=2‎\gamma‎_{‎\times‎‎\lceil‎\frac{r+1‎}{2}‎‎\rceil‎‎,t}(G)-n
\end{equation}
and
\begin{equation}
\alpha_{st}^{2}(G)=n-2\gamma_{\times\lfloor\frac{r}{2}\rfloor,t}(G).
\end{equation}
From (8), (9) and (10), the desired results follow.
\end{proof}
Using Theorem \ref{th8} and inequalities (3),...,(6) we can bound $\gamma‎‎^{0}_{st}‎‎(G)‎‎‎$ of a regular graph $G$ from both above and below as follows.
\begin{theorem}
Let $G$ be an $r$-regular graph of order $n$. Then
$$(\frac{1-r}{1+r})n‎‎\leq ‎\gamma‎‎^{0}_{st}‎‎(G)‎‎‎\leq0,\ \ \  r‎\equiv0\ ‎(mod\ 2)$$
and
$$-‎(\frac{r^2+1}{r^2+2r-1})n‎\leq ‎\gamma‎‎^{0}_{st}‎‎(G)‎‎‎\leq-n/r,\ \ \  r‎\equiv1\ ‎(mod\ 2).$$
Furthermore, these bounds are sharp.
\end{theorem}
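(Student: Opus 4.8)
The plan is to obtain both chains of inequalities by simply substituting the two identities of Theorem~\ref{th8} into the already-known sharp bounds (3)--(6) for $\gamma_{st}$ and $\alpha_{st}^{2}$ of regular graphs; the only thing to be careful about is matching the even/odd cases correctly and handling the sign reversal that appears in the odd case.

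First I would treat the case $r\equiv 1\ (mod\ 2)$. Here Theorem~\ref{th8} gives $\gamma^{0}_{st}(G)=-\gamma_{st}(G)$. The upper bound (3) (odd case) reads $\gamma_{st}(G)\leq\left(\frac{r^{2}+1}{r^{2}+2r-1}\right)n$; multiplying by $-1$ turns it into the lower bound $\gamma^{0}_{st}(G)\geq-\left(\frac{r^{2}+1}{r^{2}+2r-1}\right)n$. Likewise the lower bound (5) (odd case) reads $\gamma_{st}(G)\geq n/r$; multiplying by $-1$ gives the upper bound $\gamma^{0}_{st}(G)\leq -n/r$. Combining these produces $-\left(\frac{r^{2}+1}{r^{2}+2r-1}\right)n\leq\gamma^{0}_{st}(G)\leq -n/r$.

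Next I would treat the case $r\equiv 0\ (mod\ 2)$. Here Theorem~\ref{th8} gives $\gamma^{0}_{st}(G)=\alpha_{st}^{2}(G)$. Plugging this into the lower bound (4) (even case), namely $\alpha_{st}^{2}(G)\geq\left(\frac{1-r}{1+r}\right)n$, and into the upper bound (6) (even case), namely $\alpha_{st}^{2}(G)\leq 0$, yields at once $\left(\frac{1-r}{1+r}\right)n\leq\gamma^{0}_{st}(G)\leq 0$.

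Finally, for sharpness I would note that each of (3)--(6) is sharp over $r$-regular graphs, so for each of these four inequalities there is an $r$-regular graph (of the appropriate parity) attaining equality. Since the identities in Theorem~\ref{th8} hold for \emph{every} $r$-regular graph, for such an extremal graph $G$ the quantity $\gamma^{0}_{st}(G)$ (which equals $\alpha_{st}^{2}(G)$ when $r$ is even and $-\gamma_{st}(G)$ when $r$ is odd) attains the corresponding bound above; hence all four bounds are best possible. There is no real obstacle here: the argument is a direct translation, and the only step that needs attention is making sure the inequalities in (3) and (5) are negated in the correct direction so the final chain is consistent.
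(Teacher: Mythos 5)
Your proposal is correct and matches the paper's (implicit) argument exactly: the paper states this theorem with no separate proof, noting only that it follows from Theorem~\ref{th8} together with the inequalities (3)--(6), which is precisely the substitution-and-sign-reversal you carry out. The sharpness claim is likewise inherited from the sharpness of (3)--(6) as you observe.
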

As an immediate result of Theorem \ref{th8} we have $\gamma‎‎^{0}_{st}‎‎(G)=-\gamma_{st}‎‎(G)$, for all cubic graph $G$. Hosseini Moghaddam et al. \cite{hmsv} showed that $‎\gamma‎_{st}(G)‎\leq 2n/3$ is a sharp upper bound for all connected cubic graph $G$ different from the Heawood graph $G‎_{14}‎$. Therefore, if $G$ is a connected cubic graph different from $G‎_{14}‎$, then $\gamma‎‎^{0}_{st}‎‎(G)‎\geq-2n/3‎$ is a sharp lower bound.
\begin{figure}
\begin{center}
\begin{tikzpicture}
\draw  (0:1.3cm) -- (25.7:1.3cm) -- (51.4:1.3cm) -- (77.1:1.3cm) --
(102.8:1.3cm) -- (128.5:1.3cm) -- (154.2:1.3cm) -- (179.9:1.3cm) --
(205.6:1.3cm) -- (231.3:1.3cm) -- (257:1.3cm) --
 (282.7:1.3cm) -- (308.4:1.3cm) --
(334.1:1.3cm)  -- cycle;
\draw  (0:1.3cm) -- (128.5:1.3cm) -- cycle;
\draw  (25.7:1.3cm) -- (257.5:1.3cm) -- cycle;
\draw  (51.4:1.3cm) -- (179.9:1.3cm) -- cycle;
\draw  (77.1:1.3cm) -- (308.4:1.3cm) -- cycle;
\draw  (102.8:1.3cm) -- (231.3:1.3cm) -- cycle;
\draw  (154.2:1.3cm) -- (282.7:1.3cm) -- cycle;
\draw  (205.6:1.3cm) -- (334.1:1.3cm) -- cycle;
\node at (0,-2) {Figure 1. The Heawood graph  $G_{14}$};
\foreach \x in {0,25.7,51.4,77.1,102.8,128.5,154.2,179.9,205.6,231.3,257,282.7,308.4,334.1}{
\draw [black,fill](\x:1.3cm) circle  (1.5pt);}
\end{tikzpicture}
\end{center}
\end{figure}

\section{Trees}

Henning \cite{h} proved that $‎\gamma‎_{st}(T)‎\geq2‎‎‎$, for any tree $T$ of order $n$. A similar result cannot be presented for $\gamma‎‎^{0}_{st}‎‎(T)$. In what follows we show that in general $\gamma‎‎^{0}_{st}‎‎(T)$ is not bounded from both above and below.  In fact, we prove a stronger result as follows.
\begin{proposition}
For any integer $k$, there exists a tree $T$ with $\gamma‎‎^{0}_{st}‎‎(T)=k$.
\end{proposition}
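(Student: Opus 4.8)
The plan is to construct, for any given integer $k$, an explicit tree $T$ together with an explicit ISTDF of weight $k$, and then argue that no heavier ISTDF exists. The basic building block will be a \emph{spider} (a star with each edge possibly subdivided) or, more flexibly, a \emph{corona-type} tree: take a central path or star and attach pendant leaves to chosen vertices. Leaves are the key gadget here, because a leaf $u$ with support vertex $w$ forces only the single constraint $f(w)=f(N(u))\le 0$, i.e. $f(w)=-1$, while the leaf $u$ itself is essentially ``free'' to take value $+1$ as long as $w$'s other neighborhood constraint is met. This asymmetry is what lets us push the weight up (toward large positive $k$) or down (toward large negative $k$).

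First I would handle the two regimes separately. For $k\le 0$: small cases such as $\gamma^{0}_{st}(K_2)$ and short paths already give a few negative values, and for $k$ very negative one can take a star $K_{1,m}$ with $m$ large and odd-or-even adjusted, or more robustly a ``double star'' / subdivided star, where every leaf is forced to sit opposite a $-1$; computing $\gamma^{0}_{st}$ for these families is routine and yields every sufficiently negative integer, with small values patched by hand. For $k\ge 0$ the idea is to start from a tree realizing a small nonpositive value and then repeatedly attach a pendant path of length $2$ (a ``$P_3$ hanging off a vertex'') or a pendant $P_2$; each such attachment, if done at a vertex that is not itself constrained to be $-1$, can be arranged to increase the maximum ISTDF weight by a controlled amount (typically by $1$ or $2$) without creating new violated constraints elsewhere. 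One clean way to organize this: build $T$ as a ``caterpillar'' whose spine is a long path and hang a prescribed number of leaves at alternate spine vertices, then choose $f$ to be $+1$ on all leaves and on half the spine, $-1$ on the other half, and check the two inequality types ($f(N(v))\le 0$ for spine vertices and for support vertices).

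The key steps, in order, are: (1) fix a candidate family $T=T(a,b)$ parametrized by a couple of integers (e.g. spine length and number of pendant leaves); (2) exhibit an explicit function $f:V(T)\to\{-1,1\}$ and verify $f(N(v))\le 0$ for every vertex class — leaves, supports, internal spine vertices — this is the bulk of the routine checking; (3) compute $f(V(T))$ and show it equals the desired $k$; (4) prove optimality, i.e. that every ISTDF of $T$ has weight $\le k$, by a local/counting argument: for each support vertex its neighbor must be $-1$, and one bounds the number of $+1$'s in terms of the number of $-1$'s forced this way, mirroring the inequality $(n-\gamma^{0}_{st}(G))/2=|M|$ used earlier together with the structure of $T$; (5) finally, sweep $k$ across all of $\mathbb{Z}$ by letting the parameters range and patching finitely many small exceptional values of $k$ with ad hoc small trees.

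The main obstacle I anticipate is step (4), the optimality/upper bound on the weight: it is easy to write down a tree and a good function, but showing \emph{no} ISTDF does better requires a genuine combinatorial argument about how the $-1$ labels forced by leaves propagate, and one must be careful that internal vertices of the spine do not secretly allow a heavier assignment. I would address this by choosing the family so that the spine is short (ideally the tree is close to a ``generalized star'' or ``spider'' of bounded diameter), which keeps the set of constraints small and makes the upper-bound counting essentially a finite check plus one inductive/linear-algebraic step on the number of pendant leaves; for the very negative values, optimality is comparatively easy since there almost every vertex is forced to $-1$. A secondary nuisance is parity: as with $K_n$, $C_n$, and $K_{m,n}$ in the earlier observations, the attainable values may jump by $2$ for a naive family, so I would include a second parameter (e.g. a single extra subdivision, or one additional leaf) to shift parity and thereby hit both residues mod $2$, ensuring the construction realizes \emph{every} integer $k$, not just those of one parity.
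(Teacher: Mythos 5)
Your proposal is a strategy outline rather than a proof: you never fix a concrete family $T(a,b)$, never compute its inverse signed total domination number, and you explicitly defer the one step that carries all the difficulty, namely the optimality argument in your step (4) showing that \emph{no} ISTDF of your tree exceeds weight $k$. Since the proposition asserts that the \emph{maximum} weight equals $k$, exhibiting a function of weight $k$ is only half the job; without the matching upper bound the argument is incomplete. The paper does commit to explicit families (for $k\geq 1$, the path $P_{k+4}$ with two pendant leaves attached to each internal vertex $v_3,\dots,v_{k+2}$; for $k\leq -1$, a chain of $|k|$ copies of a $P_3$ with two leaves on each vertex), and the optimality there rests on the observation that every support vertex is forced to $-1$ and each support vertex with two leaves can tolerate at most one $+1$ leaf — exactly the kind of local counting you gesture at but do not carry out.

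There is also a concrete error in your negative-$k$ regime: the star $K_{1,m}$ does \emph{not} realize arbitrarily negative values. The center is forced to $-1$ by the leaf constraints, and the center's own constraint only forces at most half the leaves to be $-1$, so $\gamma^{0}_{st}(K_{1,m})$ equals $-1$ for $m$ even and $-2$ for $m$ odd, independent of $m$. To drive the maximum weight down you need many support vertices each forcing a net deficit (your subdivided-star idea can be made to work — a spider with $m$ legs of length $3$ has ISTDN $1-m$ — but you would still have to verify that, and you did not). The positive-$k$ caterpillar idea is essentially the paper's construction in spirit, but as written neither regime is established, so the proposal has genuine gaps in both the construction and the optimality proof.
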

\begin{proof}
We consider three cases.\vspace{1mm}\\
{\bf Case 1.} Let $k=0$. Consider the $v‎_{1}-v‎_{2}-v‎_{3}-v‎_{4}‎‎‎‎$ path $P‎_{4}‎$. Clearly, $f(v‎_{1})=f(v‎_{4})=1$ and $f(v‎_{2})=f(v‎_{3})=-1$ define a maximum ISTDF of $P‎_{4}‎$ with weight $0$.
\vspace{1mm}\\
{\bf Case 2.} Let $k‎\geq1‎$. Consider the path $P‎_{k+4}‎$ on vertices $v‎_{1},...,v‎_{k+4}‎‎$, respectively, in which $v‎_{1}‎$ and $v‎_{k+4}‎$ are the leaves. Let $T$ be a tree obtained from $P‎_{k+4}‎$ by adding two leaves to each vertex $v‎_{i}‎‎$, for all $3‎\leq i‎\leq k+2‎‎$. The condition $f(N(v))‎\leq0‎$, for all ISTDF $f$ and $v\in V(T)$, follows that all support vertices must be assigned $-1$ under $f$. This shows that $f(v‎_{2}‎)=...=f(v‎_{k-3}‎)=-1$ and $f(v)=1$, for $v‎\neq ‎v‎_{2},...,v‎_{k-3}‎$, is a maximum ISTDF of $T$ with weight $k$.
\vspace{1mm}\\
{\bf Case 3.} Let $k‎\leq-1‎$. Let $T‎_{i}‎$ be a tree obtained from a the path $P‎_{3}‎$ on vertices $v_{i1},v‎_{i2}‎$ and $v_{i3}$ by adding two leaves $\ell‎‎^{1}_{ij}‎$ and $\ell‎‎^{2}_{ij}‎$ to $v_{ij}$, for $j=1,2,3$. Let the tree $T$ be the disjoint union of $T‎_{1},...,‎T‎_{k}$ by adding a path on the vertices $v‎_{12},...,v‎_{k2}‎‎$. Then $f:V(T)‎\rightarrow\{-1,1\}‎$ defined by,
$$f(v)=\left \{
\begin{array}{lll}
-1 & \mbox{if} & v=v‎_{ij}, ‎\ell‎‎^{2}_{i1}, \ell‎‎^{2}_{i3} ‎‎‎‎ \\
\ 1 & \mbox{if} & v‎\neq ‎v‎_{ij}, ‎\ell‎‎^{2}_{i1}, \ell‎‎^{2}_{i3}
\end{array}
\right.$$
is a maximum ISTDF of $T$ with weight $k$.
\end{proof}

We bound the ISTDN of a tree from below by considering its leaves and support vertices and characterize all trees attaining this bound. For this purpose, we introduce some notation. The set of leaves and support vertices of a tree $T$ are denoted by $L=L(T)$ and $S=S(T)$, respectively. Consider $L‎_{v}‎$ as the set of all leaves adjacent to the support vertex $v$, and $T'$ as the subgraph of $T$ induced by the set of support vertices.\vspace{1mm}\\
The following lemma will be useful.
\begin{lemma}\label{th9}
If $T$ is a tree, then there exists an ISTDF of $T$ of the weight $\gamma‎‎^{0}_{st}‎‎(T)$ that assigns to at least ‎$\lfloor‎\frac{\ell‎_{i}‎}{2}‎‎\rfloor$ leaves of the support vertex $v‎_{i}‎$ the value $1$, where $\ell‎_{i}‎$ is the number of leaves adjacent to $v‎_{i}‎$.
\end{lemma}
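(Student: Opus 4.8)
The plan is to start from an arbitrary maximum ISTDF $f$ of $T$ and modify it, support vertex by support vertex, so that each support vertex ends up with at least $\lfloor \ell_i/2 \rfloor$ of its leaves assigned $1$, all while keeping the weight equal to $\gamma^{0}_{st}(T)$ and keeping the function an ISTDF. First I would record the basic structural fact that drives everything: since $f(N(v)) \leq 0$ for every $v$, in particular $f(N(v_i)) \leq 0$ at a support vertex $v_i$, so among the $\ell_i$ leaves in $L_{v_i}$ the number assigned $1$ cannot exceed roughly the number assigned $-1$ plus the (possibly nonpositive) contribution of $v_i$'s non-leaf neighbors. More importantly, each leaf $u \in L_{v_i}$ has $N(u) = \{v_i\}$, so the ISTDF condition at $u$ forces $f(v_i) = f(N(u)) \leq 0$, i.e. $f(v_i) = -1$; thus every support vertex is already assigned $-1$ under any ISTDF, and this will not be disturbed by our modifications.

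The core of the argument is a local exchange. Fix a support vertex $v_i$ and suppose fewer than $\lfloor \ell_i/2 \rfloor$ of its leaves are assigned $1$; then strictly more than $\ell_i - \lfloor \ell_i/2 \rfloor = \lceil \ell_i/2 \rceil$ leaves are assigned $-1$, so at least $\lfloor \ell_i/2 \rfloor + 1$ of them get $-1$ while at most $\lfloor \ell_i/2 \rfloor - 1$ get $1$. I would pick one leaf $u \in L_{v_i}$ with $f(u) = -1$ and flip it to $f(u) = 1$, obtaining a new function $f'$. I must check three things: (a) the weight does not decrease — but flipping a $-1$ to $1$ increases the weight by $2$, which would contradict maximality unless $f'$ fails to be an ISTDF; so the real content is to show that after possibly a second compensating flip the weight is preserved and the ISTDF property is restored. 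The cleaner route: flip $u$ from $-1$ to $1$ and simultaneously flip some other suitably chosen leaf or vertex from $1$ to $-1$ to keep the weight constant. A natural choice is another leaf $u'$ of the \emph{same} support vertex $v_i$ with $f(u') = 1$ (which exists only if there is one); if no leaf of $v_i$ is assigned $1$, then all $\ell_i \geq 1$ leaves are $-1$, and flipping one of them to $1$ strictly increases the weight while only possibly violating the constraint at $v_i$ — but $f(N(v_i))$ increases by $2$, and since every vertex $w \ne v_i$ that is adjacent to $u$ does not exist (leaves have degree one), no other constraint is touched; so if this flip broke no constraint at $v_i$ we would contradict maximality, meaning $f(N(v_i))$ was exactly $0$ before and becomes $2$ after, so we must compensate by flipping a neighbor of $v_i$ from $1$ to $-1$.

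So the actual exchange is: flip $u \in L_{v_i}$ from $-1$ to $1$, and flip some $w \in N(v_i)$ with $f(w)=1$ from $1$ to $-1$; this preserves the weight. I then verify the ISTDF condition at every vertex: at $v_i$ the sum $f(N(v_i))$ is unchanged; at $u$ (a leaf) we have $f(N(u)) = f(v_i) = -1 \leq 0$, fine; at $w$'s neighbors the sum $f(N(x))$ can only decrease (since $f(w)$ went down), so their constraints remain satisfied; the only possibly problematic vertex is $w$ itself if $w$ is not a leaf — but I may always \emph{choose} $w = v_i$ is not allowed since $f(v_i)=-1$, so I need $w$ to be a neighbor of $v_i$ assigned $1$; since $f(N(v_i)) \leq 0$ and $v_i$ has at least the $\ell_i \geq 1$ leaves among its neighbors with more $-1$'s than $1$'s among them... here I must be careful that such a $w$ actually exists. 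The honest resolution: iterate. Repeatedly apply these weight-preserving flips, using as a monovariant the quantity $\sum_i \max\{0, \lfloor \ell_i/2\rfloor - (\text{number of leaves of } v_i \text{ assigned } 1)\}$, and argue it strictly decreases. \textbf{The main obstacle} is exactly this existence-and-termination issue: showing that whenever the target is not yet met at some $v_i$, a valid compensating flip exists (either a same-support leaf assigned $1$, or a non-leaf neighbor assigned $1$, whichever the degree bookkeeping permits), and that the process cannot cycle — which I expect to handle by choosing, among all maximum ISTDFs, one that maximizes the total number of leaves assigned $1$, and then deriving a contradiction from the local exchange if the conclusion failed at any support vertex.
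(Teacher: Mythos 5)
Your overall strategy coincides with the paper's: take a maximum ISTDF $f$, observe that the leaf constraints force $f(v)=-1$ on every support vertex, and then perform weight-preserving local exchanges, working with an extremal choice of $f$ (among maximum ISTDFs, one maximizing the number of leaves assigned $1$). However, the step you yourself label ``the main obstacle'' is exactly the step that carries the proof, and you leave it open: you never show that when some $v_i$ has fewer than $\lfloor\ell_i/2\rfloor$ leaves in $P$, either an uncompensated flip is legal or a compensating vertex $w$ exists. That is a genuine gap, and it closes with one inequality you never write down. Put $p=|N(v_i)\cap P|$, so that $f(N(v_i))=2p-\deg(v_i)$. If $p\leq\lfloor\ell_i/2\rfloor-1$, then since $\deg(v_i)\geq\ell_i$ and $2\lfloor\ell_i/2\rfloor\leq\ell_i$ we get $f(N(v_i))\leq 2\lfloor\ell_i/2\rfloor-2-\ell_i\leq-2$; hence flipping a $-1$ leaf of $v_i$ to $+1$ (one exists because $p<\ell_i$) needs no compensation: it keeps $f(N(v_i))\leq0$, touches no other constraint, and raises the weight by $2$, contradicting maximality. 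This is precisely the paper's first step. In the remaining case $p\geq\lfloor\ell_i/2\rfloor$, so if fewer than $\lfloor\ell_i/2\rfloor$ leaves of $v_i$ lie in $P$ there is necessarily a \emph{non-leaf} neighbor $w\in N(v_i)\cap P$ --- the compensating vertex whose existence you could not guarantee --- and also a leaf $u\in L_{v_i}\cap M$. Swapping their values preserves the weight and the ISTDF condition ($f(N(v_i))$ is unchanged, $f(N(x))$ drops by $2$ for the other neighbors $x$ of $w$, and $f(N(u))=f(v_i)=-1$), and it strictly increases the total number of leaves in $P$ while leaving the leaf count at every other support vertex intact, since $w$ is not a leaf and $u$ has no neighbor besides $v_i$. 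With your extremal choice of $f$ both cases are outright contradictions, so no iteration or cycling argument is even needed; this swap is what the paper compresses into its ``without loss of generality.''

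Two smaller points. Exchanging a $-1$ leaf of $v_i$ with a $+1$ leaf of the \emph{same} $v_i$, which you float as ``a natural choice,'' is a no-op for the quantity you are trying to increase, so that branch of your discussion leads nowhere. And your worry that ``the only possibly problematic vertex is $w$ itself'' is unfounded: $f(N(w))$ does not change under the swap, because neither $u$ nor $w$ lies in $N(w)$ (the only neighbor of $u$ is $v_i$, and $f(v_i)=-1$ so $w\neq v_i$). Also, in your uncompensated-flip discussion the value $f(N(v_i))=-1$ before the flip is overlooked; the clean dichotomy is $f(N(v_i))\leq-2$ versus $f(N(v_i))\in\{-1,0\}$, which is exactly the dichotomy $p\leq\lfloor\ell_i/2\rfloor-1$ versus $p\geq\lfloor\ell_i/2\rfloor$ used above when $\deg(v_i)=\ell_i$ fails to hold.
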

\begin{proof}
Let $f:V‎\rightarrow\{-1,1\}‎$ be an ISTDF of weight $\gamma‎‎^{0}_{st}‎‎(T)$. Suppose that there exists support vertex $v‎_{i}‎$ which is adjacent to at most $\lfloor‎\frac{\ell‎_{i}‎}{2}‎‎\rfloor-1$ vertices in $P$. Then, $f(N(v‎_{i}‎))‎\leq-deg(v‎_{i}‎)‎+\lfloor‎\frac{\ell‎_{i}‎}{2}‎‎\rfloor-1‎\leq-2‎$. Let $u$ be a leaf adjacent to $v‎_{i}‎$ with $f(u)=-1$. Define $f':V\rightarrow\{-1,1\}$ by
$$f'(v)=\left \{
\begin{array}{lll}
1 & \mbox{if} & v=u  \\
f(v) & \mbox{if} & v‎\neq‎‎ u.
\end{array}
\right.$$
Then $f'$ is an ISTDF of $T$ with weight $\gamma‎‎^{0}_{st}‎‎(T)+2$, which is a contradiction. Therefore, every support vertex $v‎_{i}‎$ is adjacent to at least $\lfloor‎\frac{\ell‎_{i}‎}{2}‎‎\rfloor$ vertices in $P$. Consider the support vertex $v‎_{1}‎$ and the vertices $u‎_{1}‎,...,u‎_{\lfloor‎\frac{\ell‎_{1}‎}{2}‎‎\rfloor}$ in $L‎_{v‎_{1}‎}‎$. Without loss of generality we can assume that $u‎_{1}‎,...,u‎_{\lfloor‎\frac{\ell‎_{1}‎}{2}‎‎\rfloor}$ have the value $1$ under $f$. Since $f$ is a maximum ISTDF then this statement holds for the other support vertices, as well.‎
\end{proof}
Now we define $‎\Omega‎‎$ to be the family of all trees $T$ satisfying:\vspace{1mm}\\
$(a)$ For any support vertex $w$ we have $|L‎_{w}‎|‎\geq2‎$ or $T$ is isomorphic to the path $P‎_{2}‎$;\\
$(b)$ $‎\Delta(T')‎‎\leq1‎$;\\
$(b_1)$ If $‎\Delta(T')‎‎=1‎$, then all of vertices of $T$ are leaves or support vertices and $|L‎_{w‎}|‎$ is even for all support vertex $w$;\\
$(b_2‎)$ if $‎\Delta(T')=0‎$, then $(i)$ $T$ is isomorphic to the star $K‎_{1,n-1}‎$ or $(ii)$ each support vertex is adjacent to just one vertex in $V‎\setminus(L‎\cup S‎)‎$, every vertex in $V‎\setminus(L‎\cup S‎)‎$ has at least one neighbor in $S$ and $|L‎_{w‎}|‎$ is even for all support vertices $w$.\vspace{1mm}\\
We are now in a position to present the following lower bound.
\begin{theorem}
Let $T$ be a tree of order $n‎$ with the set of support vertices $S=\{v‎_{1},...,v‎_{s}‎‎\}$. Then
$$\gamma‎‎^{0}_{st}‎‎(T)‎\geq-n+2(‎\lfloor‎\frac{\ell‎_{1}‎}{2}‎‎\rfloor‎‎+...+‎\lfloor‎\frac{\ell‎_{s}‎}{2}‎‎\rfloor)‎$$
where $\ell‎_{i}‎$ is the number of leaves adjacent to $v‎_{i}‎$. Moreover, the equality holds if and only if $T‎‎\in ‎\Omega‎‎$.
\end{theorem}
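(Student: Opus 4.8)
The plan is to prove the bound itself in one line and then devote essentially all the work to the equality characterization, which I will split into necessity and sufficiency.

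\textbf{The lower bound.} This is immediate from Lemma~\ref{th9}: pick an ISTDF $f$ of weight $\gamma^0_{st}(T)$ that assigns $1$ to at least $\lfloor\ell_i/2\rfloor$ leaves of each support vertex $v_i$. Since the sets $L_{v_1},\dots,L_{v_s}$ are pairwise disjoint (a leaf has a unique neighbour), $|P|\ge\sum_{i=1}^{s}\lfloor\ell_i/2\rfloor$, and from $|P|+|M|=n$ we get $\gamma^0_{st}(T)=f(V)=|P|-|M|=2|P|-n\ge -n+2\sum_{i=1}^{s}\lfloor\ell_i/2\rfloor$.

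\textbf{Necessity ($\gamma^0_{st}(T)=-n+2\sum\lfloor\ell_i/2\rfloor\Rightarrow T\in\Omega$).} Assuming equality, I would first observe that the above chain becomes a chain of equalities: $|P|=\sum\lfloor\ell_i/2\rfloor$, hence $|P\cap L_{v_i}|=\lfloor\ell_i/2\rfloor$ for every $i$ and $P\subseteq L$, so every non-leaf vertex is assigned $-1$ by $f$. Next I would show that $f(N(w))\in\{-1,0\}$ for every support vertex $w$: if $f(N(w))\le-2$, then $w$ still has $\lceil\ell_w/2\rceil\ge1$ leaves in $M$, and flipping one of them to $1$ gives an ISTDF of weight $\gamma^0_{st}(T)+2$, a contradiction. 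Writing $\ell_w=|L_w|$ and using that the non-leaf neighbours of $w$ all lie in $M$, we have $f(N(w))=f(L_w)-(\deg(w)-\ell_w)$ with $f(L_w)=\lfloor\ell_w/2\rfloor-\lceil\ell_w/2\rceil\in\{0,-1\}$; this forces $\deg(w)-\ell_w\le1$ for all $w$ (so $\Delta(T')\le1$, i.e.\ $(b)$) and $\deg(w)-\ell_w=0$ whenever $\ell_w$ is odd. A short case analysis using connectedness then finishes: if some $w$ has $\deg(w)-\ell_w=0$ then $T\cong P_2$ or $T\cong K_{1,n-1}$; otherwise $\deg(w)-\ell_w=1$ for every $w$, which forces all $\ell_w$ even and $\ge2$ (giving $(a)$) and $f(N(w))=-1$. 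In that situation, if $\Delta(T')=1$ the two endpoints of an edge of $T'$ are each other's unique non-leaf neighbour, so $T$ is a double star with even leaf counts, giving $(b_1)$; and if $\Delta(T')=0$, each $w$ has a unique neighbour in $I:=V\setminus(L\cup S)$, while an internal vertex $x$ with no neighbour in $S$ would have $N(x)\subseteq I\subseteq M$, so $f(N(v))=-\deg(v)\le-2$ for all $v\in N(x)$ and flipping $x$ to $1$ again contradicts maximality — hence $(b_2)(ii)$ holds.

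\textbf{Sufficiency ($T\in\Omega\Rightarrow$ equality).} By the lower bound it is enough to prove $\gamma^0_{st}(T)\le -n+2\sum\lfloor\ell_i/2\rfloor$, i.e.\ $|P_f|\le\sum\lfloor\ell_i/2\rfloor$ for every ISTDF $f$, and I would treat the shapes of members of $\Omega$ one at a time; in every case each support vertex is forced into $M$ by one of its leaves. For $P_2$ this gives $P_f=\emptyset$; for $K_{1,n-1}$, $f(N(c))\le0$ caps $|P_f|$ at $\lfloor(n-1)/2\rfloor$; for a double star with even leaf counts, $f(N(w_i))=-1+f(L_{w_i})\le0$ together with the parity of $\ell_i$ gives $|P_f\cap L_{w_i}|\le\ell_i/2$, and there are no other vertices. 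For $T$ of type $(b_2)(ii)$, let $x(w)$ be the unique internal neighbour of the support vertex $w$; then $f(N(w))=f(L_w)+f(x(w))\le0$ and the evenness of $\ell_w$ give $|P_f\cap L_w|\le\ell_w/2$ always, and $|P_f\cap L_w|\le\ell_w/2-1$ when $f(x(w))=1$. Since each internal vertex has a support neighbour $w$ — which is then forced to have $x(w)=x$ — the map $w\mapsto x(w)$ sends $\{w:x(w)\in P_f\}$ onto $P_f\cap I$, so $\#\{w:x(w)\in P_f\}\ge|P_f\cap I|$, whence $|P_f|=\sum_w|P_f\cap L_w|+|P_f\cap I|\le\sum_w\ell_w/2$, as required.

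\textbf{Where the difficulty lies.} I expect the necessity direction to be the crux: turning the single numerical equality into the rigidity statement ``every non-leaf vertex gets $-1$'' and then squeezing out the structural conditions via the two flip moves (flipping a pendant leaf, flipping an internal vertex), where one must remember that flipping a vertex changes each affected neighbourhood sum by $2$, not $1$. The charging argument in the $(b_2)(ii)$ part of sufficiency — offsetting each $+1$ placed on $I$ against a forced unit of loss among the leaves — is the other place that needs care.
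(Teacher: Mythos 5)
Your proposal is correct and follows essentially the same route as the paper: the canonical assignment of $1$ to $\lfloor\ell_i/2\rfloor$ leaves per support vertex for the lower bound, local flip arguments (leaf flips and internal-vertex flips, each changing a neighbourhood sum by $2$) for necessity, and Lemma \ref{th9} for sufficiency. The only substantive difference is that in the sufficiency direction you replace the paper's brief "it is not hard to see" contradiction with an explicit counting/charging bound $|P_f|\le\sum_i\lfloor\ell_i/2\rfloor$ for every ISTDF, which is a more complete rendering of the same idea rather than a different method.
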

\begin{proof}
Let $f:V‎\rightarrow\{-1,1\}‎$ be a function which assigns $1$ to $‎\lfloor‎‎\frac{\ell‎_{i}‎}{2}‎\rfloor‎‎$ leaves of $v‎_{i}‎$ and $-1$ to all remaining vertices. Then, it is easy to see that $f$ is an ISTDF of $T$. Therefore
$$\gamma‎‎^{0}_{st}‎‎(T)‎\geq f(V)=-n+2(‎\lfloor‎\frac{\ell‎_{1}‎}{2}‎‎\rfloor‎‎+...+‎\lfloor‎\frac{\ell‎_{s}‎}{2}‎‎\rfloor).$$
Let $T$ be a tree for which the equality holds. So, we may assume that the above function $f$ is a maximum ISTDF of $T$. We first show that $T$ satisfies $(a)$. Without loss of generality we assume that $T$ is not isomorphic to the path $P‎_{2}‎$. Suppose that there exists a support vertex $v‎_{k}‎$ adjacent to just one leaf $u$. Thus, $s‎\geq2‎$. Then the function $g:V‎\rightarrow\{-1,1\}‎$ which assigns $1$ to $u$ and $‎\lfloor‎‎\frac{\ell‎_{i}‎}{2}‎\rfloor‎‎$ leaves of $v‎_{i}‎$, $i‎\neq k‎$, and $-1$ to all other vertices is an ISTDF of $T$ with weight $f(V)+1$, which is a contradiction. Therefore, all support vertices are adjacent to at least two leaves.\\
That the tree $T$ satisfies $(b)$, may be seen as follows. Let $‎\Delta(T')‎‎\geq2‎$. Then there are three vertices $v‎_{i-1}‎$, $v‎_{i}‎$ and $v‎_{i+1}‎$ on a path as a subgraph of $T'$. Let $u$ be a leaf adjacent to $v‎_{i}‎$ with $f(u)=-1$. Since $v‎_{i}‎$ is adjacent to the vertices $v‎_{i-1}‎$ and $v‎_{i+1}‎$ with $f(v‎_{i-1})=f(v‎_{i+1})=-1$, then $g:V‎\rightarrow\{-1,1\}$ defined by
$$g(v)=\left \{
\begin{array}{lll}
1 & \mbox{if} & v=u  \\
f(v) & \mbox{if} & v‎\neq‎‎ u
\end{array}
\right.$$
is an ISTDF of $T$ with weight $f(V)+2$, a contradiction. Thus, $‎\Delta(T')‎‎\leq‎1$. We now distinguish two cases depending on $‎\Delta(T')=0‎$ or $‎1‎$.\vspace{1mm}\\
{\bf Case 1.} $‎\Delta(T')=1‎$. Suppose to the contrary that there exists a vertex $v$ in $V‎\setminus (S‎\cup L)‎‎$ adjacent to a support vertex $v‎_{k}‎$ with degree one in $T'$. Then it must be assigned $-1$ under $f$. Let $u$ be a leaf adjacent to $v‎_{k}‎$ with $f(u)=-1$. We define a function $h:V‎\rightarrow\{-1,1\}$ by
$$h(v)=\left \{
\begin{array}{lll}
1 & \mbox{if} & v=u  \\
f(v) & \mbox{if} & v‎\neq‎‎ u.
\end{array}
\right.$$
Since $vv‎_{k}‎$ is an edge of $T$, then $h$ is an ISTDF of $T$ with weight $f(V)+2$, which is a contradiction. Therefore all of vertices of $T$ are leaves or support vertices. This implies that $S=\{v‎_{1},v‎_{2}‎‎\}$ and $v‎_{1}v‎_{2}$ is an edge of $T$. Now let $|L‎_{v‎_{1}‎}‎|$ be odd. Then the function $h':V‎\rightarrow\{-1,1\}‎$ that assigns $1$ to $\lfloor‎\frac{\ell‎_{1}‎}{2}‎‎\rfloor+1$ leaves of $v‎_{1}‎$, $\lfloor‎\frac{\ell‎_{2}‎}{2}‎‎\rfloor$ leaves of $v‎_{2}‎$ and $-1$ to all remaining vertices is an ISTDF of $T$ with weight $f(V)+2$, a contradiction. A similar argument shows that $|L‎_{v‎_{2}‎}‎|$ is even, as well.
\vspace{1mm}\\
{\bf Case 2.} $‎\Delta(T')=0‎$. If $T$ is isomorphic to the star $K‎_{1,n-1}‎$, then $(b_2)$ holds. Otherwise, $s‎\geq2‎$. Suppose to the contrary that there exists a support vertex $w$ adjacent to at least two vertices $u‎_{1}‎,u‎_{2}‎‎\in V‎\setminus(L‎\cup S‎)‎‎$ and $u$ is a leaf adjacent to $w$ with $f(u)=-1$. Since $f(u‎_{1})=f(u‎_{2})=-1$ then,  $r:V‎\rightarrow\{-1,1\}$ defined by
$$r(v)=\left \{
\begin{array}{lll}
1 & \mbox{if} & v=u  \\
f(v) & \mbox{if} & v‎\neq‎‎ u
\end{array}
\right.$$
is an ISTDF of $T$ with weight $f(V)+2$, which is a contradiction. Moreover, if $u$ is a vertex in $V‎\setminus(L‎\cup S‎)‎$ with no neighbor in $S$ then the function $r':V‎\rightarrow\{-1,1\}$ for which $r'(u)=1$ and $r'(v)=f(v)$ for all remaining vertices is an ISTDF of $T$ with weight $f(V)+2$, this contradicts the fact that $f$ is a maximum ISTDF of $T$. Finally, the proof of the fact that $|L‎_{w‎}|‎$ is even for all support vertex $w$ is similar to that of Case 1.\\
These two cases imply that $T$ satisfies $(b_1)$ and $(b_2)$.\\
Conversely, suppose that $T‎\in ‎\Omega‎‎$ and $f:V‎\rightarrow\{-1,1\}‎$ is an ISTDF of $T$ with weight
\begin{equation}
f(V)=\gamma‎‎^{0}_{st}‎‎(T)>-n+2(‎\lfloor‎\frac{\ell‎_{1}‎}{2}‎‎\rfloor‎‎+...+‎\lfloor‎\frac{\ell‎_{s}‎}{2}‎‎\rfloor).
\end{equation}
By Lemma \ref{th9}, we may assume that $f$ assigns $1$ to at least $\lfloor‎\frac{\ell‎_{i}‎}{2}‎‎\rfloor$ leaves $u‎‎^{i}_{1},...,u‎‎^{i}_{\lfloor‎\frac{\ell‎_{i}‎}{2}‎‎\rfloor}$ of the support vertex $v‎_{i}‎$, for $1‎\leq i‎\leq s‎‎$. The inequality (11) shows that there exists a vertex $u\in V‎\setminus‎ ‎\cup‎‎_{i=1}^{s}\{u‎‎^{i}_{1},...,u‎‎^{i}_{\lfloor‎\frac{\ell‎_{i}‎}{2}‎‎\rfloor}\}‎$ such that $f(u)=1$. Since all support vertices must be assigned $-1$ under $f$, then $u\in L‎_{v‎_{i}‎}‎$ or $u$ is a vertex in $V‎\setminus(L‎\cup S‎)‎$ adjacent to a support vertex $v‎_{i}‎$, for some $1‎\leq i‎\leq s‎‎$. It is not hard to see that this contradicts the fact that $T$ belongs to ‎$\Omega‎‎$ and $f(N(v‎_{i}‎))‎\leq0‎$, for all $1‎\leq i‎\leq s‎‎$. This completes the proof.
\end{proof}


\end{document}